\newcolumntype{L}{>{\centering\arraybackslash}m{1.5cm}}
\numberwithin{equation}{section}
\newtheorem{theorem}{Theorem}
\theoremstyle{definition}
\theoremstyle{remark}
\newtheorem{remark}{Remark}
\newcommand{\RN}[1]{\uppercase\expandafter{\romannumeral#1}}
\begin{document}
\title[Subexponential Growth In FDEs]
{Subexponential Growth Rates in Functional Differential Equations}

\author{John A. D. Appleby}
\address{Edgeworth Centre for Financial Mathematics, School of Mathematical
Sciences, Dublin City University, Glasnevin, Dublin 9, Ireland}
\email{john.appleby@dcu.ie} \urladdr{webpages.dcu.ie/\textasciitilde
applebyj}

\author{Denis D. Patterson}
\address{School of Mathematical
Sciences, Dublin City University, Glasnevin, Dublin 9, Ireland}
\email{denis.patterson2@mail.dcu.ie} \urladdr{sites.google.com/a/mail.dcu.ie/denis-patterson}
\subjclass{Primary: 34K25; Secondary: 34C11.}
\thanks{Denis Patterson is supported by the Government of Ireland Postgraduate Scholarship Scheme operated by the Irish Research Council under the project GOIPG/2013/402.} 
\keywords{Functional differential equations, asymptotics, subexponential growth}
\date{\today}
\begin{abstract}
This paper determines the rate of growth to infinity of a scalar autonomous nonlinear functional differential equation with finite delay, where the right hand side is a positive continuous linear functional of $f(x)$. We assume $f$ grows sublinearly, and is such that solutions should exhibit growth faster than polynomial, but slower than exponential. Under some technical conditions on $f$, it is shown that the solution of the functional differential equation is asymptotic to that of an auxiliary autonomous ordinary differential equation with righthand side proportional to $f$ (with the constant of proportionality equal to the mass of the finite measure associated with the linear functional), provided $f$ grows more slowly than $l(x)=x/\log x$. This linear--logarithmic growth rate is also shown to be critical: if $f$ grows more rapidly than $l$, the ODE dominates the FDE; if $f$ is asymptotic to a constant multiple of $l$, the FDE and ODE grow at the same rate, modulo a constant non--unit factor. 
\end{abstract}
\maketitle

\section{Introduction}
In this paper, the growth rate to infinity of positive solutions of nonlinear autonomous functional differential equations of the form 
\begin{equation} \label{eq.fde}
x'(t) = \int_{[-\tau,0]}\mu(ds)f(x(t+s)), \,\, t >0,\,\, x_0 = \psi \in C([-\tau,0];(0,\infty)), 
\end{equation}
is studied. Here $\tau>0$ and $\mu$ is a positive finite Borel measure on $[-\tau,0]$ (so by definition $\mu(E)\in [0,\infty)$ for all Borel sets $E\subseteq[-\tau,0]$, and $\mu([-\tau,0])=:M\in (0,\infty)$). If $f$ is positive, by the Riesz representation theorem, \eqref{eq.fde} is equivalent to $x'(t)=L([f(x)]_t)$, $t>0$ where $L$ is a positive continuous linear functional from $C([-\tau,0];\mathbb{R}^+)$ to $\mathbb{R}^+$. Uniqueness of a continuous solution of \eqref{eq.fde} is guaranteed by asking that $f$ is continuously differentiable (see e.g. \cite{GLS} for existence results and properties of measures); positivity of solutions is guaranteed by the positivity of $\mu$ and of $f$ on $[0,\infty)$. Non--explosion of solutions in finite time, as well as subexponential growth to infinity of solutions (in the sense that $\log x(t)/t\to 0$ as $t\to\infty$) arises because $f'(x)\to 0$ as $x\to\infty$. Precise asymptotic results are obtained by asking that $f$ or $f'$ belong to the class of regularly varying functions (see \cite{BGT}). Recall that a measurable function $g:(0,\infty)\to (0,\infty)$ is regularly varying at infinity with index $\beta\in \mathbb{R}$ if $g(\lambda t)/g(t)\to \lambda^\beta$ as $t\to\infty$, for every $\lambda>0$. We write $g\in \text{RV}_\infty(\beta)$.

In the case when $f$ grows to infinity slightly slower than linearly (in the sense that $f\in\text{RV}_\infty(\beta)$ for $\beta<1$), it is known when $\mu(ds)=\delta_{\{0\}}(ds)+\lambda \delta{\{-\tau\}}(ds)$, that the rate of growth of solutions of \eqref{eq.fde} and of 
\begin{equation} \label{eq.equivode}
y'(t)=Mf(y(t)), \quad t>0; y(0)=y_0>0
\end{equation}
with $M= 1+\lambda$ is the same, in the sense that $x(t)/y(t)\to 1$ as $t\to\infty$ (see \cite{AMcR}). The non--delay equation \eqref{eq.equivode} can be considered as a special type of equation \eqref{eq.fde} in which all the mass $M$ of $\mu$ is concentrated at $0$. On the other hand, if $f$ is linear, collapsing the mass of $\mu$ to zero will grant different rates of (exponential) growth to solutions of \eqref{eq.fde} and \eqref{eq.equivode}. Therefore, the phenomenon that solutions of \eqref{eq.equivode} yield the growth rate of those of \eqref{eq.fde} ceases for some critical rate of growth of $f$ faster than functions in $\text{RV}_{\infty}(\beta)$ for $\beta<1$, but slower than linear. This suggests that the critical growth rate may be captured by a function $f$ in $\text{RV}_\infty(1)$ but with $f(x)/x\to 0$ (or $f'(x)\to 0$) as $x\to\infty$.

In our main result here (Theorem~\ref{thm.2.2}), we show that the critical rate of growth is $\text{O}(x/\log x)$: more precisely, if      
\begin{equation} \label{eq.lambda}
\lambda := \lim_{x \to \infty}\frac{f(x)}{x/\log(x)} \in [0,\infty],
\end{equation}
then $x(t)/y(t)\to \exp(-\lambda\int_{[-\tau,0]} |s|\mu(ds))$ as $t\to\infty$, provided $f$ is ultimately increasing and $f'\in \text{RV}_\infty(0)$, a hypothesis stronger than, but implying $f\in \text{RV}_\infty(1)$. In proving Theorem~\ref{thm.2.2}, we find that $F(x(t))/t\to M$ as $t\to\infty$ where 
\begin{equation} \label{def.F}
F(x) = \int_1^x \frac{1}{f(u)}du, \quad x>0
\end{equation}
and similarly $F(y(t))/t\to M$ as $t\to\infty$. Therefore our result identifies a subtle distinction in the growth rates of $x$ and $y$, which are in some sense close. 

Since \eqref{eq.fde} can be written, with $M=\int_{[-\tau,0]} \mu(ds)$, as 
\begin{equation} \label{delta}
x'(t)=Mf(x(t))-\int_{[-\tau,0]} \mu(ds) \{f(x(t))-f(x(t+s))\}=:Mf(x(t))-\delta(t),
\end{equation}
we can view \eqref{eq.fde} as a perturbation of \eqref{eq.equivode}, and if the perturbed term $\delta$ (which will be positive for large $t$, by the monotonicity of $x$ and $f$) is small relative to $Mf(x(t))$, we may expect $x(t)/y(t)$ to tend to a finite limit. This is in the spirit of 
a Hartman--Wintner type--result (see \cite[Cor X.16.4]{H}, \cite{HW}), so to gain insight into the asymptotic behaviour of \eqref{eq.fde}, we prove a nonlinear Hartman--Wintner theorem (Theorem~\ref{thm.2.1}), comparing the growth rate of the differential equations $x'(t)=Mf(x(t))-\epsilon(x(t))$ and $y'(t)=Mf(y(t))$, where $f(x)/x\to 0$ and $\epsilon(x)/f(x)\to 0$ as $x\to\infty$. Under an integral condition on $\epsilon$, we can show that $x(t)/y(t)$ tends to zero, unity or a non--trivial non--unit limit. Even though the result is for a simple scalar ODE, we were unable to find in the literature a result of this type. Furthermore, we believe this result is of independent interest, and can show, when allied with an analysis of the asymptotic behaviour of $\delta$, that Theorem~\ref{thm.2.1} identifies the critical growth rate of $f$ in \eqref{eq.lambda} for the FDE \eqref{eq.fde}, and predicts accurately that $x(t)/y(t)\to \exp(-\lambda\int_{[-\tau,0]} |s|\mu(ds))$ as $t\to\infty$.
For these reasons, the result is presented and proven here. We note of course, that there is a huge literature in asymptotic integration and Hartman--Wintner type--results in determining the asymptotic behaviour of nonlinear functional differential equations; some excellent, representative, papers include \cite{graef,KO,pituk}. Furthermore, the use of regular variation to analyse the asymptotic behaviour of ordinary differential equations is a very active field of research. Many threads to this research are presented in~\cite{Maric2000}.
   
%The only references included thus far are those regarding properties of regularly varying functions and existence and uniqueness of solutions of functional differential equations; 
%namely \cite{App_Patt_RV} and \cite{BGT} for regular variation and \cite{GLS} for existence and uniqueness of solutions.

\section{Results}
We start by proving a scalar, sublinear type of Hartman--Wintner theorem. 
\begin{theorem}\label{thm.2.1}
Suppose $f \in RV_\infty(1)$ is an increasing function, that $f(x)-\epsilon(x)>0$ for all $x>0$ and that the following limits hold as $x \to \infty$: 
\[
0 < \frac{\epsilon(x)}{f(x)} \to 0; \quad \frac{f(x)}{x} \to 0.
\]
If $f$ and $\epsilon$ are continuous, and $x$ and $y$ are the continuous solutions of 
\[
y'(t) = f(y(t)), \,\, t >0, \,\, y(0)>0; \quad x'(t) = f(x(t)) - \epsilon(x(t)), \,\, t >0, \,\, x(0)>0,
\]
and there is $\mu\in [0,\infty)$ such that 
\begin{align}\label{mu}
\lim_{x \to \infty}\frac{f(x)}{x} \int_0^x \frac{\epsilon(u)}{f^2(u)}du = \mu,
\end{align}
then 
\[
\lim_{t \to \infty}\frac{x(t)}{y(t)} = e^{-\mu}.
\]
\end{theorem}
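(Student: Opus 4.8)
The plan is to pass to the natural time-change $F$ of~\eqref{def.F}, under which $y'=f(y)$ and $x'=f(x)-\epsilon(x)$ become exact identities, to extract sharp asymptotics for $F(y(t))-F(x(t))$, and then to convert these into asymptotics for $\log\big(y(t)/x(t)\big)$. As preliminaries: since $0<x'(t)=f(x(t))-\epsilon(x(t))<f(x(t))$ and $\int^{\infty}du/f(u)=\infty$ (as $f(u)/u\to0$, so $1/f$ eventually dominates $1/u$), the solutions $x,y$ are global, strictly increasing and tend to $\infty$. Because the $y$-equation is autonomous, replacing $y(0)$ by a larger value only shifts $y$ in time, and $\log y(t+a)-\log y(t)=\int_t^{t+a} f(y(s))/y(s)\,ds\to0$ as $t\to\infty$; so this does not alter the limit of $x(t)/y(t)$, and we may assume $x(0)\le y(0)$, whence (by comparison) $x(t)\le y(t)$ for all $t\ge0$. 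Write $\ell(x):=f(x)/x\in RV_\infty(0)$, so $\ell(x)\to0$.

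Dividing each equation by $f$ and integrating gives $F(y(t))=F(y(0))+t$ and $F(x(t))=F(x(0))+t-I(t)$, where $I(t):=\int_0^t\epsilon(x(s))/f(x(s))\,ds$, hence
\[
F(y(t))-F(x(t))=I(t)+c_0,\qquad c_0:=F(y(0))-F(x(0))\ge0.
\]
To size $I(t)$ I would change variables $u=x(s)$ (valid since $x$ is strictly increasing, $ds=du/(f(u)-\epsilon(u))$) and use $\epsilon/f\to0$:
\[
I(t)=\int_{x(0)}^{x(t)}\frac{\epsilon(u)\,du}{f(u)\big(f(u)-\epsilon(u)\big)}=\int_{x(0)}^{x(t)}\big(1+o(1)\big)\frac{\epsilon(u)}{f^2(u)}\,du.
\]
By~\eqref{mu}, $\int_0^x\epsilon(u)/f^2(u)\,du=(\mu+o(1))\,x/f(x)=(\mu+o(1))/\ell(x)$. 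A routine lemma — if $g\ge0$ and $h\to L\in\R$, then $\int^x hg\sim L\int^x g$ when $\int^x g\to\infty$, while $\int^x hg$ stays bounded when $\int^{\infty}g<\infty$ — applied according to whether $\int^{\infty}\epsilon/f^2$ diverges (then $I(t)\sim\int^{x(t)}\epsilon/f^2\sim\mu/\ell(x(t))$) or converges (then $\mu=0$, as $\ell\to0$, and $I$ is bounded) gives in every case $I(t)\,\ell(x(t))\to\mu$, and hence, since $c_0\,\ell(x(t))\to0$, also $\big(F(y(t))-F(x(t))\big)\,\ell(x(t))\to\mu$.

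It remains to transfer this into the ratio. Setting $r(t):=y(t)/x(t)\ge1$, substituting $u=x(t)v$ in $F(y(t))-F(x(t))=\int_{x(t)}^{y(t)}du/f(u)$ and using $f(x(t)v)=x(t)\,v\,\ell(x(t)v)$ yields
\[
\big(F(y(t))-F(x(t))\big)\,\ell(x(t))=\int_1^{r(t)}\frac{\ell(x(t))}{\ell(x(t)v)}\,\frac{dv}{v}\ \longrightarrow\ \mu .
\]
Now invoke Potter's bounds for $\ell\in RV_\infty(0)$: given $\delta>0$ and $\eta>1$ there is $X$ with $\eta^{-1}v^{-\delta}\le\ell(xv)/\ell(x)\le\eta v^{\delta}$ — equivalently $\eta^{-1}v^{-\delta}\le\ell(x)/\ell(xv)\le\eta v^{\delta}$ — for all $x\ge X$, $v\ge1$; hence for $t$ large
\[
\eta^{-1}\,\frac{1-r(t)^{-\delta}}{\delta}\ \le\ \int_1^{r(t)}\frac{\ell(x(t))}{\ell(x(t)v)}\,\frac{dv}{v}\ \le\ \eta\,\frac{r(t)^{\delta}-1}{\delta}.
\]
Using that the middle term tends to $\mu$, letting $\delta\downarrow0$ and then $\eta\downarrow1$ in the two inequalities gives $\liminf_{t\to\infty}\log r(t)\ge\mu$ and $\limsup_{t\to\infty}\log r(t)\le\mu$; therefore $\log\big(y(t)/x(t)\big)\to\mu$, i.e. $x(t)/y(t)\to e^{-\mu}$.

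I expect this last transfer to be the main obstacle: $\int_{x(t)}^{y(t)}du/f(u)$ records a $1/\ell$-weighted logarithmic length of $[x(t),y(t)]$ rather than $\log(y(t)/x(t))$ itself, and making the conversion exact forces one to control the slowly varying factor $\ell$ uniformly over an interval $[1,r(t)]$ that is not known in advance to be bounded — Potter's bounds together with the iterated limit in $\delta$ and $\eta$ are precisely what supply this. A lesser, bookkeeping-type difficulty is handling the regimes $\mu>0$ and $\mu=0$ uniformly in the estimates for $I(t)$.
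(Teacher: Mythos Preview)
Your argument is correct. Both proofs begin from the same time--change idea: integrating $x'/f(x)$ and $y'/f(y)$ gives $F(y(t))-F(x(t))=c_0+I(t)$ with $I(t)=\int_{x(0)}^{x(t)}\epsilon(u)/\bigl(f(u)(f(u)-\epsilon(u))\bigr)\,du$, and the hypothesis \eqref{mu} together with $\epsilon/f\to0$ yields $\bigl(F(y(t))-F(x(t))\bigr)\cdot f(x(t))/x(t)\to\mu$. This is exactly the paper's statement $\Psi(x)\sim-\mu\,x/f(x)$, phrased along the trajectory.

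The genuine divergence is in the transfer step from $F(y)-F(x)$ to $y/x$. The paper fixes $K$ and studies the sign of $\delta_x(KF^{-1}(x))$, using the Uniform Convergence Theorem for $1/f\in RV_\infty(-1)$ to show $\int_{1/K}^1 f(z)/f(\alpha z)\,d\alpha\to\log K$; an intermediate--value argument then sandwiches $\Phi^{-1}(x)/F^{-1}(x)$ between $(1\pm\epsilon)e^{-\mu}$, with the case $\mu=0$ handled separately by a Mean Value Theorem estimate. You instead write $\int_{x(t)}^{y(t)}du/f(u)=\int_1^{r(t)}\ell(x(t))^{-1}\,\ell(x(t)v)^{-1}\,\ell(x(t))\,dv/v$ and bound the ratio $\ell(x(t))/\ell(x(t)v)$ uniformly in $v\ge1$ by Potter's theorem, obtaining two--sided estimates $(1-r^{-\delta})/\delta$ and $(r^{\delta}-1)/\delta$ that invert to give $\log r(t)\to\mu$ directly, without a case split. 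Your route is a bit more streamlined (one argument covers all $\mu\ge0$, and you never pass through the inverse functions $F^{-1},\Phi^{-1}$ explicitly); the paper's shooting argument, on the other hand, isolates the inverse--function asymptotic $\Phi^{-1}/F^{-1}\to e^{-\mu}$ as a standalone statement, which is reused verbatim in the proof of Theorem~\ref{thm.2.2}. The minor bookkeeping (WLOG $x(0)\le y(0)$ via time--shift of $y$, and the dichotomy on $\int^\infty\epsilon/f^2$) is handled correctly.
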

\begin{proof}
The increasing, invertible functions 
\[
F(x) = \int_1^x \frac{1}{f(u)}du; \quad \Phi(x) = \int_1^x \frac{1}{f(u)-\epsilon(u)}du,
\]
are both well defined and we then have that
\[
F(y(t)) = F(y(0)) + t; \quad \Phi(x(t)) = \Phi(x(0))+t.
\]
Hence
\[
y(t) = F^{-1}(F(y(0)) + t); \quad x(t) = \Phi^{-1}(\Phi(x(0))+t).
\]
Since $(\epsilon(x) + f(x))/x \to 0$, $f(x)/x \to 0$ as $x \to \infty$, we have that 
$y(t) \sim F^{-1}(t)$ and $ x(t) \sim \Phi^{-1}(t)$ as $t \to \infty.$
Therefore it is sufficient to prove that $\Phi^{-1}(t) / F^{-1}(t) \to e^{-\mu}$ as $t \to \infty$. Define the function
\[
\Psi(x) = F(x) - \Phi(x) = - \int_1^x \frac{\epsilon(u)}{f^2(u) - f(u)\epsilon(u)}du.
\]
By hypothesis, we then have $\Psi(x) \sim - \mu [x/f(x)]$ as $x \to \infty$. Now let 
$
\Psi(x) = \left[ \bar{\epsilon}(x) - \mu \right]x/{f(x)},
$
where $\bar{\epsilon}(x) \to 0$ as $x \to \infty$. Thus, since $\Phi$ is invertible,
$
x = F(\Phi^{-1}(x)) - \Psi(\Phi^{-1}(x)).
$
For a fixed $x > 0$, $y = \Phi^{-1}(x)$ is the unique solution to $\delta_x(y) = 0$, where
$
\delta_x(y) = F(y) - \Psi(y) - x = F(y) - \left[\bar{\epsilon}(y) - \mu \right]y/{f(y)} - x. 
$
Suppose $K<1$ and let $z = K F^{-1}(x)$. Thus $x = F(z/K)$ and 
\begin{align*}
\delta_x(z) &= F(z) - \left[\bar{\epsilon}(z) - \mu \right]\frac{z}{f(z)} - F(z/K) 
= \int_{z/K}^z \frac{1}{f(u)}du - \left[\bar{\epsilon}(z) - \mu \right]\frac{z}{f(z)} \\
& = \frac{z}{f(z)}\left[ \mu - \bar{\epsilon}(z) + \frac{1}{z}\int_{z/K}^z \frac{f(z)}{f(u)}du \right] 
= \frac{z}{f(z)}\left[ \mu - \bar{\epsilon}(z) + \int_{1/K}^1 \frac{f(z)}{f(\alpha z)}d\alpha \right].
\end{align*}
Hence
\[
\frac{f(z)}{z}\delta_x(z) = \mu - \bar{\epsilon}(z) + \int_{1/K}^1 \frac{f(z)}{f(\alpha z)}d\alpha,
\]
and, since $\bar{\epsilon}(z) \to 0$, we have
\begin{align}\label{lim}
\lim_{x \to \infty}\frac{f(K F^{-1}(x))}{K F^{-1}(x)}\delta_x(K F^{-1}(x)) = \lim_{z\to \infty}\left(\int_{1/K}^1 \frac{f(z)}{f(\alpha z)}d\alpha \right) + \mu.
\end{align}
The function $\tilde{f}(x) := 1/f(x) \in RV_\infty(-1)$ and we then have
\begin{align*}
\int_{1/K}^1 \frac{f(z)}{f(\alpha z)}d\alpha &= \int_{1/K}^1 \frac{\tilde{f}(\alpha z)}{\tilde{f}(z)}d\alpha
= \int_{1/K}^1 \left( \frac{f(z)}{f(\alpha z)} - \alpha^{-1} \right) d\alpha + \int_{1/K}^1 \alpha^{-1} d\alpha.
\end{align*}
Applying the Uniform Convergence Theorem for Regularly Varying functions (Theorem 1.5.2 in \cite{BGT}) we return to \eqref{lim} to conclude that 
\begin{align*}
\lim_{x \to \infty}\frac{f(K F^{-1}(x))}{K F^{-1}(x)}\delta_x(K F^{-1}(x)) = \int_{1/K}^1 \frac{1}{\alpha}d\alpha + \mu = \log(K) + \mu.
\end{align*}
If $\mu >0$, then the above limit is positive for $K > e^{-\mu}$ and negative for $K < e^{-\mu}$. Now let $\epsilon \in (0,e^{\mu}-1) \cap (0,1)$
be arbitrary and consider
\[
\lim_{x \to \infty}\frac{f(e^{-\mu}(1-\epsilon) F^{-1}(x))}{e^{-\mu}(1-\epsilon) F^{-1}(x)}\delta_x(e^{-\mu}(1-\epsilon) F^{-1}(x)) 
%= \log(e^{-\mu}(1-\epsilon)) + \mu 
= \log(1-\epsilon) < 0.
\]
Similarly, we obtain
\[
\lim_{x \to \infty}\frac{f(e^{-\mu}(1+\epsilon) F^{-1}(x))}{e^{-\mu}(1+\epsilon) F^{-1}(x)}\delta_x(e^{-\mu}(1+\epsilon) F^{-1}(x)) > 0.
\]
Therefore, there exist $x_1(\epsilon)$ and $x_2(\epsilon)$ such that for all $x \geq x^* := \max(x_1(\epsilon),x_2(\epsilon))$
\[
\delta_x(e^{-\mu}(1-\epsilon) F^{-1}(x)) < 0; \quad \delta_x(e^{-\mu}(1+\epsilon) F^{-1}(x)) > 0.
\] 
However, $\delta_x(y) = 0$ if and only if $y = \Phi^{-1}(x)$, and thus for all $x \geq x^*$ we have
\[
e^{-\mu}(1-\epsilon) F^{-1}(x) < \Phi^{-1}(x) < e^{-\mu}(1+\epsilon) F^{-1}(x).
\]
This allows us to conclude that 
\[
\lim_{x \to \infty}\frac{\Phi^{-1}(x)}{F^{-1}(x)} = e^{-\mu}, \,\, \mu \in (0,\infty).
\]
In the case when $\mu = 0$ we note that since $\epsilon(x)>0$ we have $F(x) < \Phi(x)$ and therefore $F^{-1}(x) > \Phi^{-1}(x)$.
Hence we may immediately conclude that
\[
\limsup_{t \to \infty}\frac{\Phi^{-1}(t)}{F^{-1}(t)} \leq 1.
\]
Recalling that $F(x) = \Phi(x) + \Psi(x)$ we have that $t = \Phi(F^{-1}(t)) + \Psi(F^{-1}(t))$. Thus
\[
F^{-1}(t) = \Phi^{-1}(t - \Psi(F^{-1}(t))) = u(t - \Psi(F^{-1}(t))),
\]
where $u(t) = \Phi^{-1}(t)$ and obeys $u'(t) = f(u(t))-\epsilon(u(t)), \,\, u(0)=1$. Next we write
\begin{align}\label{est}
\frac{\Phi^{-1}(t)}{F^{-1}(t)} = \frac{\Phi^{-1}(t)}{\Phi^{-1}(t- \Psi(F^{-1}(t)))} = \frac{u(t)}{u(t- \Psi(F^{-1}(t)))}.
\end{align}
Now by the Mean Value Theorem there exists $\theta_t \in [0,1]$ such that
$
u(t - \Psi(F^{-1}(t)))$ \\ $= u(t) - u'(t - \theta_t \Psi(F^{-1}(t)))\Psi(F^{-1}(t))$ \\$
= u(t) 
-\Psi(F^{-1}(t))\left[f(u(t - \theta_t \Psi(F^{-1}(t)))) - \epsilon(u(t - \theta_t \Psi(F^{-1}(t)))) \right].
$
\\Taking care to note that $\Psi(F^{-1}(t))\epsilon(u(t - \theta_t \Psi(F^{-1}(t))))<0$ we have the estimate
\begin{align*}
u(t - \Psi(F^{-1}(t))) &\leq u(t) - \Psi(F^{-1}(t))f(u(t- \Psi(F^{-1}(t)))) \\
&= \Phi^{-1}(t) - \Psi(F^{-1}(t))f(F^{-1}(t)).
\end{align*}
Putting this into \eqref{est} yields
\begin{align*}
\frac{\Phi^{-1}(t)}{F^{-1}(t)} &\geq \frac{\Phi^{-1}(t)}{\Phi^{-1}(t) - \Psi(F^{-1}(t))f(F^{-1}(t))}
= \frac{1}{1- \frac{\Psi(F^{-1}(t))f(F^{-1}(t))}{\Phi^{-1}(t)}}.
\end{align*}
Now let $\mu(t) = \Psi(F^{-1}(t))f(F^{-1}(t))/F^{-1}(t) < 0$. Thus
$
\mu(t)F^{-1}(t)/\Phi^{-1}(t) = \Psi(F^{-1}(t))f(F^{-1}(t))/\Phi^{-1}(t).
$
Hence
\[
\frac{\Phi^{-1}(t)}{F^{-1}(t)} \geq \frac{1}{1- \mu(t)\frac{F^{-1}(t)}{\Phi^{-1}(t)}}.
\]
Now multiply across by the strictly positive number $1- \mu(t) F^{-1}(t)/\Phi^{-1}(t)$ to obtain
$
\Phi^{-1}(t)/F^{-1}(t) \geq 1 + \mu(t).
$
By hypothesis, $\mu(t) \to 0$ as $t \to \infty$ and we have $\liminf_{t \to \infty}\Phi^{-1}(t)/F^{-1}(t) \geq 1$. 
Combining this with the limit superior gives the conclusion for $\mu = 0$.
\end{proof}
\begin{remark}
Theorem~\ref{thm.2.1} can be used to strongly motivate our main result and the argument by which is it proven. Consider the rearrangement 
\eqref{delta} of \eqref{eq.fde}    
%\begin{align}\label{delta}
%x'(t) &= \int_{-\tau}^0\mu(ds)f(x(t+s)) = \int_{-\tau}^0\mu(ds)f(x(t)) - \int_{-\tau}^0\mu(ds)[f(x(t))-f(x(t+s))]\nonumber\\
%&:= M f(x(t)) - \delta(t), \,\ \mbox{ where } M:=\int_{-\tau}^{0}\mu(ds).
%\end{align}
where $M=\int_{-\tau}^{0}\mu(ds)$.
If, slightly modifying the hypotheses of Theorem~\ref{thm.2.1}, we let $f'\in RV_\infty(0)$, then it is shown (in the proof of Theorem~\ref{thm.2.2}) that $\delta(t) \sim M C f(x(t))f'(x(t))$ as $t \to \infty$, where $C:=\int_{-\tau}^0 |s|\mu(ds)$. With this in mind we revisit condition \eqref{mu} in Theorem~\ref{thm.2.1} and apply it to \eqref{delta} with $\epsilon(x) = MCf(x)f'(x)$, yielding
\begin{align*}
\lim_{x \to \infty}\frac{Mf(x)}{x}\int_0^x\frac{\epsilon(u)}{M^2 f^2(u)}du %&= \lim_{x \to \infty}\frac{Mf(x)}{x}\int_0^x\frac{M C f(u)f'(u)}{M^2 f^2(u)}du\\ 
&=\lim_{x \to \infty} \frac{C f(x)}{x} \log(f(x)) =: \mu,
\end{align*}
provided $f(x)\log f(x)/x$ has a finite limit. Thus we are tempted to impose the condition that $f(x)\log(x)/x \to \lambda \in (0,\infty)$ as $x \to \infty$ and in this case we will have $\log(f(x)) \sim \log(x)$ as $x \to \infty$. Keeping faith in this analogy we are led to believe that, similar to Theorem~\ref{thm.2.1}, we should have
\[
\lim_{t \to \infty}\frac{x(t)}{F^{-1}(Mt)} = e^{-\mu} = e^{-\lambda C},
\]  
Our next result confirms that this intuition is in fact correct.
\end{remark}
\begin{theorem}  \label{thm.2.2}
Let $f(x)>0$ for all $x>0$, $f'(x)>0$ for all $x > x_1$, $f'(x) \to 0$ as $x \to \infty$, and $f' \in RV_\infty(0)$. If $\tau>0$, $f$ obeys \eqref{eq.lambda}, and $\mu\in M([-\tau,0];\mathbb{R}^+)$ is a positive finite Borel measure, the unique continuous solution $x$ of \eqref{eq.fde} obeys 
\begin{equation} \label{eq.xasy}
\lim_{t \to \infty}\frac{x(t)}{F^{-1}(Mt)} = e^{-\lambda C},
\end{equation}
where $F$ is given by \eqref{def.F}, $M := \int_{[-\tau,0]} \mu(ds)$ and $C := \int_{[-\tau,0]} |s|\mu(ds)$.
\end{theorem}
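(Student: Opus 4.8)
The plan is to view \eqref{eq.fde} as the autonomous equation $y'=Mf(y)$ perturbed by the history term $\delta$ from \eqref{delta}, to determine the precise size of $\delta$, and then to sandwich $x$ between solutions of genuine ODEs covered by Theorem~\ref{thm.2.1}. I would begin by collecting qualitative facts about $x$: since $f>0$ on $(0,\infty)$, $\mu\geq0$ with $M>0$, and solutions of \eqref{eq.fde} remain positive, $x'>0$, so $x$ is increasing; a bounded $x$ would force $x'(t)\to Mf(\lim x)>0$, so in fact $x(t)\to\infty$. Because $f'\in RV_\infty(0)$ is positive it is non-integrable at infinity, whence $f(x)\to\infty$, and Karamata's theorem gives $f(x)\sim xf'(x)$, so $f\in RV_\infty(1)$ and $f(x)/x\sim f'(x)\to0$. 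Monotonicity of $x$ and eventual monotonicity of $f$ give $x(t)-x(t-\tau)\leq\tau Mf(x(t))$ for large $t$, hence $x(t-\tau)/x(t)\to1$; the Uniform Convergence Theorem (UCT, \cite[Thm.~1.5.2]{BGT}) then yields $f(x(r))/f(x(t))\to1$ uniformly for $r\in[t-\tau,t]$, so $0\leq\delta(t)\leq M(f(x(t))-f(x(t-\tau)))=o(f(x(t)))$ and therefore $x'(t)\sim Mf(x(t))$.

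The core of the argument, and where I expect the real difficulty, is the sharp equivalence $\delta(t)\sim MCf(x(t))f'(x(t))$. For $s\in[-\tau,0)$ one writes $f(x(t))-f(x(t+s))=\int_{x(t+s)}^{x(t)}f'(u)\,du$ and $x(t)-x(t+s)=\int_{t+s}^{t}x'(r)\,dr$. Feeding $x'(r)\sim Mf(x(r))$ and $f(x(r))\sim f(x(t))$ (uniformly on $[t-\tau,t]$) into the second identity gives $x(t)-x(t+s)\sim|s|Mf(x(t))$ uniformly in $s\in[-\tau,0)$, while the UCT for the slowly varying $f'$ gives $f'(u)\sim f'(x(t))$ uniformly for $u\in[x(t-\tau),x(t)]$; hence $f(x(t))-f(x(t+s))\sim|s|Mf(x(t))f'(x(t))$ uniformly in $s$. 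Since $s=0$ contributes nothing to \eqref{delta}, integrating this equivalence against $\mu$ gives $\delta(t)\sim Mf(x(t))f'(x(t))\int_{[-\tau,0]}|s|\mu(ds)=MCf(x(t))f'(x(t))$. The delicate points are the bootstrap — one needs $\delta=o(f(x))$, hence $x'\sim Mf(x)$, before the sharp estimate is available — and the uniformity of the regular-variation asymptotics in $s$ all the way down to $s=0$, so that they survive integration against $\mu$, including any atom of $\mu$ at $0$.

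Suppose first that $\lambda<\infty$. Fixing $\eta\in(0,1)$, the previous step produces $T$ with $g_+(x(t))\leq x'(t)\leq g_-(x(t))$ for $t\geq T$, where $g_\pm(u):=Mf(u)-(1\pm\eta)MCf(u)f'(u)>0$ for large $u$. Letting $G_\pm$ be a primitive of $1/g_\pm$ near infinity, the identity $\tfrac{d}{dt}G_\pm(x(t))=x'(t)/g_\pm(x(t))$ and monotonicity of $G_\pm$ yield $v(t):=G_+^{-1}(G_+(x(T))+t-T)\leq x(t)\leq G_-^{-1}(G_-(x(T))+t-T)=:w(t)$ for $t\geq T$, where $v,w$ solve $z'=Mf(z)-\epsilon_\pm(z)$ with $\epsilon_\pm(u)=(1\pm\eta)MCf(u)f'(u)$. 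After a harmless modification of $f$ on a compact set so that all hypotheses of Theorem~\ref{thm.2.1} hold for every $x>0$ (this changes $F$, hence $F^{-1}(Mt)$, only by an asymptotically negligible amount, since $F^{-1}(Mt+a)/F^{-1}(Mt)\to1$ for fixed $a$, as $\tfrac{d}{ds}\log F^{-1}(s)=f(F^{-1}(s))/F^{-1}(s)\to0$), Theorem~\ref{thm.2.1} applied with $Mf$ in place of $f$ and $\epsilon_\pm$ in place of $\epsilon$ gives $v(t)/F^{-1}(Mt)\to e^{-(1+\eta)\mu_0}$ and $w(t)/F^{-1}(Mt)\to e^{-(1-\eta)\mu_0}$, where $\mu_0=C\lim_{x\to\infty}f(x)\log f(x)/x$. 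Since $f\in RV_\infty(1)$ with $f(x)/x\to0$ forces $\log f(x)\sim\log x$, we get $\mu_0=C\lambda$; letting $\eta\downarrow0$ yields \eqref{eq.xasy}.

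Finally, if $\lambda=\infty$ then $e^{-\lambda C}=0$ (the subcase $C=0$ being trivial, as then $\delta\equiv0$), and I would argue directly. The crux estimate gives $\delta(t)\geq\tfrac12 MCf(x(t))f'(x(t))$ for large $t$, so $\tfrac{d}{dt}F(x(t))=M-\delta(t)/f(x(t))\leq M-\tfrac12 MCf'(x(t))$; combining this with $\int^t f'(x(r))\,dr\sim M^{-1}\log f(x(t))\sim M^{-1}\log x(t)$ (a consequence of $x'(r)\sim Mf(x(r))$) gives $F(x(t))\leq Mt-\tfrac{C}{2}\log x(t)\,(1+o(1))$. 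If $x(t_n)\geq\varepsilon F^{-1}(Mt_n)$ along some $t_n\to\infty$ with $\varepsilon>0$, then, writing $y_n:=F^{-1}(Mt_n)$ and using $1/f\in RV_\infty(-1)$,
\[
\tfrac{C}{2}\log x(t_n)\,(1+o(1))\leq F(y_n)-F(x(t_n))=\int_{x(t_n)}^{y_n}\frac{du}{f(u)}\leq\int_{\varepsilon y_n}^{y_n}\frac{du}{f(u)}\sim(-\log\varepsilon)\frac{y_n}{f(y_n)}=o(\log y_n),
\]
contradicting $\log x(t_n)\geq\log(\varepsilon y_n)\sim\log y_n$; hence $x(t)/F^{-1}(Mt)\to0$, which is \eqref{eq.xasy} in this case.
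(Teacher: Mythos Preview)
Your argument is correct and shares the paper's overall architecture: both proofs first establish $x'(t)\sim Mf(x(t))$, then derive the key asymptotic $\delta(t)\sim MC\,f(x(t))f'(x(t))$ (the paper via the Mean Value Theorem applied to $f\circ x$, you via integrating $f'$ directly and applying the UCT to the slowly varying $f'$), and finally sandwich $x$ between solutions of the ODEs $z'=Mf(z)-(1\pm\eta)MC\,f(z)f'(z)$. The difference lies in how the sandwich is closed. You invoke Theorem~\ref{thm.2.1} as a black box, verifying \eqref{mu} by computing $\tfrac{Mf(x)}{x}\int^x \tfrac{(1\pm\eta)Cf'(u)}{f(u)}\,du\to(1\pm\eta)C\lambda$ via $\log f(x)\sim\log x$; this is more modular and exploits Theorem~\ref{thm.2.1} as the paper's own Remark after that theorem suggests. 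The paper instead repeats, in self-contained form, the $\delta_\pm(KF^{-1}(x))$ sign analysis from the proof of Theorem~\ref{thm.2.1}, which avoids the bookkeeping of modifying $f$ on a compact set to meet the global positivity hypotheses of Theorem~\ref{thm.2.1}. For $\lambda=\infty$ the two proofs genuinely diverge: the paper keeps the sign-of-$\delta_\epsilon$ framework (the limit \eqref{asym} becoming a positive constant independent of $K$), whereas you argue by contradiction from the integral inequality $F(x(t))\le Mt-\tfrac{C}{2}\log x(t)\,(1+o(1))$ together with $\int_{\varepsilon y}^{y}\tfrac{du}{f(u)}\sim(-\log\varepsilon)\,y/f(y)=o(\log y)$; your route here is shorter and avoids introducing $\Phi_{\pm\epsilon}$ altogether.
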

\begin{remark}
We note that under these hypotheses we have $f(x)/x \to 0$ as $x \to \infty$. Since $f$ is ultimately increasing it must either have a finite limit or tend to infinity as $x \to \infty$. In the former case, $x'(t)$ tends to a finite limit, and \eqref{eq.xasy} is trivially true.%L'H\^{o}pitals rule confirms the claim.
\end{remark}
\begin{proof}
Our hypotheses on $\psi$ and the positivity of $f$ immediately yield that $x(t)\to\infty$ as $t\to\infty$. Thus there exists $T_1$ such that $x(t)> x_1$ for all $t \geq T_1$. Letting $t > T_1 + \tau$, and noting that $t \mapsto x(t)$ is increasing on $[0,\infty)$ we have 
\[
0 < x'(t) = \int_{[-\tau,0]}\mu(ds)f(x(t+s)) < \int_{[-\tau,0]}\mu(ds)f(x(t)) < M f(x(t)), \,\, t > T_1 + \tau.
\]
This means that $x'(t)/x(t) \to 0$ as $t \to \infty$. Furthermore, for $t > T_1 + \tau$, $f(x(t+s))>f(x(t-\tau))$ for $s \in [-\tau,0]$. Thus
$
x'(t) > M f(x(t-\tau)), \,\, t > T_1 + \tau.
$
Applying the Mean Value Theorem to the continuous function $f \circ x$ for each $t > T_1 + \tau$ there exists $\theta_t \in [0,\tau]$ such that 
$
f(x(t)) = f(x(t-\tau)) + f'(x(t-\theta_t))\tau.
$
Combining this identity with the fact that $f'(x) \to 0$ as $t \to \infty$, we see that $f(x(t-\tau))/f(x(t)) \to 1$ as $t \to \infty$. Hence 
$
\lim_{t \to \infty}x'(t)/f(x(t)) = M.
$
For each $t > T_1 + \tau$ and $s \in [-\tau,0]$ there is a $\theta_{t,s} \in [s,0] \subset [-\tau,0]$ such that  
\begin{align}\label{MVT_est}
0 < f(x(t)) - f(x(t+s)) &= (f \circ x)'(t- \theta_{t,s})s = f'(x(t- \theta_{t,s}))x'(t-\theta_{t,s})|s| \nonumber \\
&= (f \, f')(x(t-\theta_{t,s}))\frac{x'(t-\theta_{t,s})}{f(x(t-\theta_{t,s}))}|s|.
\end{align}
Now for every $\epsilon \in (0,1/2)$ there exists $T_2(\epsilon)>0$ such that 
\[
M(1-\epsilon) < \frac{x'(t)}{f(x(t))} < M, \mbox{ for all } t > T_2.
\]
$t > T_2 + \tau$ implies $t - \theta_{t,s}> T_2$ and hence
\begin{align}\label{est1}
M(1-\epsilon) < \frac{x'(t-\theta_{t,s})}{f(x(t-\theta_{t,s}))} < M, \mbox{ for all } t > T_2 + \tau.
\end{align}
Next $x(t-\tau)< x(t-\theta_{t,s})<x(t)$ for $t > T_1 + \tau$, $s \in [-\tau,0]$. Since $x'(t)/x(t) \to 0$ as $t \to \infty$, there is $T_3(\epsilon)$ such that $x(t-\tau)/x(t)> 1- \epsilon$ for all $t > T_3(\epsilon) + \tau$. Let $T_4 := T_1 + T_2 + T_3 + \tau$. Hence 
$
(1-\epsilon)x(t) < x(t-\theta_{t,s}) < x(t), \,\, s \in [-\tau,0], \,\, t > T_4.
$
Then with $\lambda_{t,s}:= x(t-\theta_{t,s})/x(t)$ we have $\lambda_{t,s} \in [1-\epsilon,1]$. Now let $t > T_4 + \epsilon$, so% that 
\[
-1 + \frac{(ff')(x(t-\theta_{t,s}))}{(ff')(x(t))} = \frac{(ff')(\lambda_{t,s}x(t))}{(ff')(x(t))} - \lambda_{t,s} + \lambda_{t,s}-1.
\]
Hence for $s \in [-\tau,0]$, $t> T_4$, we have
\begin{align*}
\left|\frac{(ff')(x(t-\theta_{t,s}))}{(ff')(x(t))} -1 \right| &\leq \left|\frac{(ff')(\lambda_{t,s}x(t))}{(ff')(x(t))} - \lambda_{t,s} \right| + |\lambda_{t,s}-1| \\ &\leq \sup_{\lambda \in [1-\epsilon,1]}\left|\frac{(ff')(\lambda_{t,s}x(t))}{(ff')(x(t))} - \lambda_{t,s} \right| + \epsilon.
\end{align*}
Therefore 
\[
\lim_{x \to \infty}\sup_{\lambda \in [1/2-\epsilon,1]}\left|\frac{(ff')(\lambda_{t,s}x(t))}{(ff')(x(t))} - \lambda_{t,s} \right| =0.
\]
Hence, for every $\epsilon \in (0,1/2)$, there is $x_2(\epsilon)>0$ such that $x >x_2(\epsilon)$ implies
\[
\sup_{\lambda \in [1/2,1]}\left|\frac{(ff')(\lambda_{t,s}x(t))}{(ff')(x(t))} - \lambda_{t,s} \right| < \epsilon, \,\, x > x_2(\epsilon).
\]
Let $x(t) >x_2(\epsilon)$ for $t > T_5(\epsilon)$ and set $T_6 := T_4 + T_5$. Then for all $s \in [-\tau,0]$, $t \geq T_6$, %we have
\[
\left|\frac{(ff')(x(t-\theta_{t,s}))}{(ff')(x(t))} -1 \right| \leq 2\epsilon, \,\, t \geq T_6.
\]
Thus for $t > T_6$
\begin{align*}
f(x(t)) - f(x(t+s)) &= |s|\frac{(ff')(x(t-\theta_{t,s}))}{(ff')(x(t))} (ff')(x(t)) \frac{x'(t-\theta_{t,s})}{f(x(t-\theta_{t,s}))} \\
&= |s| \left(\frac{(ff')(x(t-\theta_{t,s}))}{(ff')(x(t))} -1 \right)(ff')(x(t)) \frac{x'(t-\theta_{t,s})}{f(x(t-\theta_{t,s}))}\\
&+ |s|(ff')(x(t)) \frac{x'(t-\theta_{t,s})}{f(x(t-\theta_{t,s}))}
:= T_1(s,t) + T_2(s,t). 
\end{align*}
Hence
$
\delta(t) := \int_{[-\tau,0]}\mu(ds)T_1(s,t) + \int_{[-\tau,0]}\mu(ds)T_2(s,t).
$
For $t \geq T_6$, we have \\
$
\left| \int_{[-\tau,0]} \mu(ds)T_1(s,t) \right| \leq 2\epsilon \int_{[-\tau,0]}|s|\mu(ds)M(ff')(x(t)),
$
and
\begin{align}\label{est2}
&\int_{[-\tau,0]} \mu(ds) T_2(s,t) \leq M \int_{[-\tau,0]} |s|\mu(ds)(ff')(x(t)),\\
&\int_{[-\tau,0]}\mu(ds) T_2(s,t) \geq M(1-\epsilon)\int_{[-\tau,0]} (ff')(x(t)).
\end{align}
Therefore we see that 
$
\lim_{t \to \infty}\delta(t)/M\int_{[-\tau,0]} |s|\mu(ds)(ff')(x(t)) = 1.
$
We note from \eqref{delta} that $x'(t)=Mf(x(t))-\delta(t)$.
%\begin{align*}
%x'(t) &= \int_{-\tau}^0 \mu(ds)f(x(t+s))= M f(x(t)) + \int_{-\tau}^0 \mu(ds)\left[ f(x(t+s))-f(x(t)) \right] \\
%&= M f(x(t)) + \delta(t).
%\end{align*}
Therefore, for $t \geq T_6$, our previous estimates yield
\begin{align*}
&x'(t) < M f(x(t)) - M C (1-\epsilon)(ff')(x(t)), \\
&x'(t) > M f(x(t)) - M C (1+\epsilon)(ff')(x(t)).
\end{align*}
Hence, with $x_3(\epsilon) = x(T_6(\epsilon))$, we have
\begin{align*}
\int_{x_3(\epsilon)}^{x(t)}\frac{du}{f(u) - C(1-\epsilon)f(u)f'(u)} &= \int_{T_6}^t \frac{x'(s)}{f(x(s))- C(1-\epsilon)f(x(s))f'(x(s))}ds \\ &\leq M(t-T_6), 
\end{align*}
and similarly
\begin{align*}
\int_{x_3(\epsilon)}^{x(t)}\frac{du}{f(u) - C(1+\epsilon)f(u)f'(u)} &= \int_{T_6}^t \frac{x'(s)}{f(x(s))- C(1+\epsilon)f(x(s))f'(x(s))}ds \\ &\geq M(t-T_6).
\end{align*}
For convenience of notation we define the functions
\begin{align*}
\Phi_{+\epsilon}(x) &:= \int_{x_3(\epsilon)}^{x(t)}\frac{du}{f(u) - C(1-\epsilon)f(u)f'(u)}, \\
\Phi_{-\epsilon}(x) &:= \int_{x_3(\epsilon)}^{x(t)}\frac{du}{f(u) - C(1+\epsilon)f(u)f'(u)}.
\end{align*}
Thus, for $t \geq T_6$, 
$
x(t) \leq \Phi_{+\epsilon}^{-1}(M(t-T_6)) $ and $
x(t) \geq \Phi_{+\epsilon}^{-1}(M(t-T_6)).
$
Define
%\[
%F_\epsilon(x) := \int_{x_3(\epsilon)}^x \frac{du}{f(u)} = F(x) - F(x_3),
%\]
%and
$
y_{\pm \epsilon}'(t) = \phi_{\pm \epsilon}(y_{\pm \epsilon}(t)), \,\, t>0, \,\,
y_{\pm \epsilon}(0) = x_3,
$
where $\phi_{\pm \epsilon}(x) = f(x) - C(1 \pm \epsilon)f(x)f'(x)$. Then
\[
y_\epsilon(t) = \Phi_\epsilon^{-1}(t); \quad y_{-\epsilon}(t) = \Phi_{-\epsilon}^{-1}(t).
\]
Since $\phi_{\pm \epsilon}(x)/x \to 0$ as $x \to \infty$, it follows that $y_{\pm \epsilon}'(t)/y_{\pm \epsilon}(t) \to 0$ as $t \to \infty$. Thus for any $c \in \mathbb{R}$, $\lim_{t \to \infty}y_{\pm \epsilon}(t-c)/y_{\pm \epsilon}(t) = 1$,
and therefore 
\[
\lim_{t \to \infty}\frac{\Phi_{\pm \epsilon}^{-1}(Mt - M T_6)}{\Phi_{\pm \epsilon}^{-1}(Mt)} = 1.
\]
A short calculation reveals that
\begin{align*}
\Phi_\epsilon(x) = F(x) - F(x_3) + \int_{x_3}^x \frac{C(1-\epsilon)f'(u)}{f(u)\left[ 1-C(1-\epsilon)f'(u) \right]}du.
\end{align*}
Similarly
\[
\Phi_{-\epsilon}(x) = F(x) - F(x_3) + \int_{x_3}^x \frac{C(1+\epsilon)f'(u)}{f(u)\left[ 1-C(1+\epsilon)f'(u) \right]}du.
\]
Define
\[
\Psi_\pm(x) = \int_{x_3}^x\frac{C(1\pm \epsilon)f'(u)}{f(u)\left[ 1-C(1 \pm\epsilon)f'(u) \right]}du,
\]
so that $\Phi_{\pm \epsilon} = F(x) - F(x_3) + \Psi_\pm(x)$.
Next we apply L'H\^{o}pitals rule to compute
\begin{align*}
\lim_{x \to \infty}\frac{\Psi_\pm(x)}{C(1\pm \epsilon)\log(f(x))} = \lim_{x \to \infty}\frac{\int_{x_3}^x \frac{f'(u)}{f(u)}\frac{1}{1- C(1\pm\epsilon)f'(u)}du}{\log(f(x))} = 1.
\end{align*}
Now fix $x$ and let
$
\delta_+(y) := F(y) -F(x_3)+\Psi(y) - x.
$
We note that $\delta_+(\Phi_\epsilon^{-1}(x))=0$ and that $\delta_+$ is continuous. 
\begin{align*}
\delta_+(y) &= \frac{1}{f(y)} + \frac{C(1-\epsilon)f'(y)}{f(y)[C(1-\epsilon)f'(y)]}
= \frac{1}{f(y)[1-(1-\epsilon)f'(y)]} > 0.
\end{align*}
Therefore, $\delta_+(y)=0$ if and only if $y=\Phi_\epsilon^{-1}(x)$. Now with $z = K F^{-1}(x)$ calculate
\begin{align*}
\delta_+(z) &= F(z) - F(z/K) - F(x_3) + \Psi_+(z)
= \int_{z/K}^z \frac{du}{f(u)} - F(x_3) + \Psi_+(z). 
\end{align*}
Now $f' \in RV_\infty(0)$ implies that $f \in RV_\infty(1)$ and this gives us that 
\[
\int_{z/K}^z \frac{du}{f(u)} \sim \frac{z}{f(z)}\log(K) \mbox{ as } z \to \infty.
\]
Similarly, $\Psi_+(z) \sim C(1-\epsilon)\log(f(z)) \sim C(1-\epsilon)\log(z)$ as $z \to \infty$. Suppose that
$\lambda \in (0,\infty)$ in \eqref{eq.lambda}.
Therefore $\int_{z/K}^z du/f(u) \sim (1/\lambda) \log(z)\log(K)$. Thus
\begin{align}\label{asym}
\lim_{x \to \infty}\frac{\delta_\epsilon(K F^{-1}(x))}{\log(K F^{-1}(x))} = \frac{1}{\lambda}\log(K) + C(1-\epsilon).
\end{align}
Let $K_+(\epsilon)=(1+\epsilon)e^{-\lambda C(1-\epsilon)}$, $K_-(\epsilon)=(1-\epsilon)e^{-\lambda C(1-\epsilon)}$. This implies 
\begin{align*}
\lim_{x \to \infty}\frac{\delta_\epsilon(K_+(\epsilon) F^{-1}(x))}{\log(K_+(\epsilon) F^{-1}(x))} &= \log(1+\epsilon) >0, \\
\lim_{x \to \infty}\frac{\delta_\epsilon(K_-(\epsilon) F^{-1}(x))}{\log(K_-(\epsilon) F^{-1}(x))} &= \log(1-\epsilon) <0.
\end{align*}
Notice that $\lim_{\epsilon \to 0^+}K_\pm(\epsilon) = e^{-\lambda C}$. Thus for every $\eta \in (0,1)$ there is $x(\eta,\epsilon)>0$ such that $x > x(\eta,\epsilon)$ implies
\[
\frac{\delta_\epsilon(K_+(\epsilon) F^{-1}(x))}{\log(K_+(\epsilon) F^{-1}(x))} > (1-\eta)\log(1+\epsilon).
\]
Let $x_4(\epsilon) = x(1/2, \epsilon)$. Then $\delta_\epsilon(K_+(\epsilon)F^{-1}(x))>0$ for all $x > x_4(\epsilon)$. Similarly, for every $\eta \in (0,1)$, there is $x(\eta,\epsilon)>0$ such that $x> x(\eta,\epsilon)$ implies 
\[
\frac{\delta_\epsilon(K_-(\epsilon) F^{-1}(x))}{\log(K_-(\epsilon) F^{-1}(x))} < (1-\eta)\log(1-\epsilon).
\]
Let $x_5(\epsilon) = x(1/2,\epsilon)$. Thus $\delta_\epsilon(K_-(\epsilon)F^{-1}(x))<0$ for all $x > x_5(\epsilon)$. Let $x_6 = \max(x_4,x_5)$ and then
$
\delta_\epsilon(K_+(\epsilon)F^{-1}(x))>0 > \delta_\epsilon(K_-(\epsilon)F^{-1}(x)), \,\, x>x_6.
$
Thus $\Phi_\epsilon^{-1}(x) \in (K_-(\epsilon)F^{-1}(x),K_+(\epsilon)F^{-1}(x))$, $x>x_6$. Let $t>T_8$, $Mt > x_6$. Then
\begin{align}\label{limsup}
\frac{x(t)}{F^{-1}(Mt)} \leq \frac{\Phi_\epsilon^{-1}(M(t-T_8))}{\Phi_\epsilon^{-1}(Mt)}\frac{\Phi_\epsilon^{-1}(Mt)}{F^{-1}(Mt)} \leq \frac{\Phi_\epsilon^{-1}(M(t-T_8))}{\Phi_\epsilon^{-1}(Mt)}K_+(\epsilon).
\end{align}
Therefore 
$
\limsup_{t \to \infty}x(t)F^{-1}(Mt) \leq K_+(\epsilon),
$
and letting $\epsilon \to 0^+$ yields
\[
\limsup_{t \to \infty}\frac{x(t)}{F^{-1}(Mt)} \leq e^{-\lambda C}.
\]
Similarly we define
$
\delta_-(y) = F(y) - F(x_3) + \Psi_-(y) - x,
$
and $\Phi_{-\epsilon}^{-1}(x)$ is the unique solution to $\delta_-(y)=0$. An exactly analogous calculation to the above case yields 
\[
\frac{x(t)}{F^{-1}(Mt)} \geq \frac{\Phi_{-\epsilon}^{-1}(M(t-T_8))}{\Phi_{-\epsilon}^{-1}(Mt)}K_-(\epsilon).
\]
Therefore taking the liminf as $t \to \infty$ and letting $\epsilon \to 0^+$ we obtain
\[
\liminf_{t \to \infty}\frac{x(t)}{F^{-1}(Mt)} \geq e^{-\lambda C}.
\]
When $\lambda = + \infty$ the proof is almost identical up to \eqref{asym}, which becomes
\[
\lim_{x \to \infty}\frac{\delta_\epsilon(KF^{-1}(x))}{\log(KF^{-1}(x))} = (1-\epsilon) > 0.
\]
Letting $\epsilon = 1/4$, for all fixed $K<1$, and $x>x^*(K)$, we have $\delta_{1/4}(KF^{-1}(x))>0$ and hence $\Phi_{1/4}(x) < KF^{-1}(x)$ for all $x>x^*(K)$. Therfore, similarly to \eqref{limsup}, we obtain $\limsup_{t \to \infty}x(t)/F^{-1}(Mt) \leq K$, for any $K<1$. Sending $K \to 0^+$ and combining this with the trivial lower bound of zero on the liminf then yields
$\lim_{t \to \infty} x(t)/F^{-1}(Mt)= 0$. 
When $\lambda =0$ the argument necessarily differs slightly since the leading order asymptotics of $\delta_\epsilon(z)$ are now given by $z/f(z)$. Consequently, \eqref{asym} is replaced by
\[
\lim_{z \to \infty}\frac{\delta_\epsilon(z)}{z/f(z)} = \lim_{x \to \infty}\frac{\delta_\epsilon(KF^{-1}(x))}{KF^{-1}(x)/f(KF^{-1}(x))} = \log(K).
\]
Now define $K_+(\epsilon)$ and $K_-(\epsilon)$ and use the above limit to obtain
$
(1-\epsilon)F^{-1}(x) < \Phi_\epsilon^{-1}(x) < (1+\epsilon)F^{-1}(x), \,\, x>x^*(\epsilon).
$
Proceeding as in \eqref{limsup} and letting $\epsilon \to 0^+$ gives 
\[
\limsup_{t \to \infty}\frac{x(t)}{F^{-1}(Mt)} \leq 1.
\]
The argument for the liminf with $\lambda = 0$ can be obtained with the same modification to the argument in the $\lambda \in (0,\infty)$ case.
\end{proof}
\begin{remark}
Scrutinising the start of the proof, we see that if $f(x)>0$ for all $x\geq 0$, $f'(x)>0$ for all $x>x_1$ and $f'(x)\to 0$ as $x\to\infty$, 
we can show, \textit{without using the other hypotheses on $f$}, that $F(x(t))/t\to M$ as $t\to\infty$. This can be inferred directly from the limit $x'(t)/f(x(t))\to M$ as $t\to\infty$. The limit $F(x(t))/t\to M$ does not yield information on the behaviour of $x(t)/F^{-1}(Mt)$ because when $f\in \text{RV}_\infty(1)$, the function $F^{-1}$ is rapidly varying at infinity. Therefore, the rest of the proof of Theorem~\ref{thm.2.2} yields more refined information on the growth rate of solutions of \eqref{eq.fde}.  
\end{remark}
\begin{remark}
It is worthwhile to mention that with a slight modification of the above argument the hypothesis that $f'$ be regularly varying in this Theorem can be omitted entirely in the case when $\lambda =0$.
\end{remark}
\section{Example}
A simple example of an $f$ obeying the hypotheses of Theorem 2.2 is to take $f(x) = (x+1)/\log^\alpha(2+x)$, for $\alpha>0$. Clearly $f(x)>0$ for $x>0$ and 
\[
f'(x) = \frac{1}{\log^\alpha(2+x)}\left(1 - \frac{(1+x)\alpha}{(2+x)\log(2+x)} \right) > 0,\,\, x > e^\alpha-2.
\] 
It is easy to see that $f'(x) \to 0$ as $x \to \infty$ and that $f' \in RV_\infty(0)$. We also have that $\lambda$ in \eqref{eq.lambda} is 0, 1, or $+\infty$ according as to whether $\alpha$ is greater than, equal to, or less than, unity.
%\[
%\lambda = \lim_{x \to \infty}\frac{f(x)}{x/\log(x)} = 
%\begin{cases}
%0, \quad \alpha > 1, \\
%1, \quad \alpha = 1, \\
%\infty, \quad \alpha <1.
%\end{cases}
%\]
Making a substitution and splitting the resulting integral gives 
\begin{align*}
F(x) = \frac{1}{1+\alpha}\log^{\alpha+1}(x+2) - \frac{3^{\alpha+1}}{\alpha+1} + \int_3^y \frac{w^\alpha}{e^\alpha-1}dw.
\end{align*}
From here we readily derive that 
\[
F(x) \sim \frac{1}{1+\alpha}\log^{\alpha+1}(x), \,\, F^{-1}(x) \sim e^{(\alpha+1)^{\frac{1}{\alpha+1}}x^{\frac{1}{\alpha+1}}}, \mbox{ as } x \to \infty. 
\]
We note once more that $F^{-1}$ is rapidly varying at infinity so the rate of growth here is indeed subexponential but faster than any power function.
\section{Further Work}
In this short section, we suggest some further developments of the main result. The hypothesis \eqref{eq.lambda} is clearly crucial: and for subexponential growth, the condition that $f'(x)>0$ and tends to zero are natural and mild. However, granted these three hypotheses, one might expect to be able to relax the regular variation hypothesis on $f$, because in the case $\lambda\in (0,\infty]$, they imply $\log f(x)/\log x \to 1$ as $x\to\infty$, which is satisfied by any $f\in \text{RV}_\infty(1)$. 
It is also tempting to conjecture that Theorem~\ref{thm.2.2} also applies to analogous convolution Volterra equations where the measure $\mu$ is now supported on $[0,\infty)$. In this case, it is possible would be especially interesting in the light of 
\eqref{eq.xasy} to contrast the cases where $C=\int_{[0,\infty)} s\mu(ds)$ is finite and infinite.  
Finally, if we view Theorem~\ref{thm.2.2} as a Hartman--Wintner type--result, which yields exact asymptotic behaviour, it is also natural to ask if there are results which give less precise estimate of the rate of growth under weaker restrictions on $f$. In this direction, estimates of the form $F(x(t))/t\to M$ as $t\to\infty$, which are weaker than \eqref{eq.xasy}, are acceptable.   
We seek in a later work to investigate these three questions.

\end{document}